\newcommand{\compactlist}{\begin{list}{$\bullet$}{\setlength{\leftmargin}{1em}}}
\newcommand{\fig}[2] { \includegraphics[scale=#1]{#2} }
\def\co{\colon\thinspace}
\def\cs{\mathop{\#}}
\newcommand{\spinc}{\ifmmode{{\mathfrak s}}\else{${\mathfrak s}$\ }\fi}
\newcommand{\spinct}{\ifmmode{{\mathfrak t}}\else{${\mathfrak t}$\ }\fi}
\newcommand{\spincw}{\ifmmode{{\mathfrak w}}\else{${\mathfrak w}$\ }\fi}
\def\sp{\mathop{\mathrm{sp}}}
\newtheorem{theorem}{Theorem}[section]
\newtheorem*{theorem*}{Theorem}
\newtheorem{lemma}[theorem]{Lemma}
\newtheorem{corollary}[theorem]{Corollary}
\theoremstyle{definition}
\newtheorem{example}[theorem]{Example}
\newtheorem{definition}[theorem]{Definition}
\theoremstyle{remark}
\numberwithin{equation}{section}
\begin{document}


\title[Splitting numbers of links and the four-genus]{Splitting numbers of links and  the four-genus}
\author{Charles Livingston}
 \address{Charles Livingston: Department of Mathematics, Indiana University, Bloomington, IN 47405 }
\email{livingst@indiana.edu}

\thanks{The author was supported by a Simons Foundation grant and by NSF-DMS-1505586.}

 
 \maketitle
 
 \begin{abstract}   The splitting number of a link is the minimum number of crossing changes between distinct components that is required to convert the link into a split link.  We provide a bound on the splitting number in terms of the four-genus of related knots.  
 \end{abstract}
 
 \section{Introduction}
     
 A  link  $L  \subset S^3$  has {\it splitting number} 
$\sp (L) = n$ if $n$ is the least nonnegative integer for which some choice of  $n$ crossing changes between distinct components  results in a  totally split link.  The study of splitting numbers and closely related invariants includes~\cite{adams, batson-seed, bfp, bg, cfp, kohn, shimizu}.  (In  \cite{adams, kohn,   shimizu}, the term splitting number permits self-crossing changes.)  Here we will investigate the splitting number from the perspective of the four-genus of knots, an  approach that is closely related to the use of concordance to study the splitting number in~\cite{bfp, bg} and earlier work considering concordances to split links~\cite{kawauchi}.  Recent work by Jeong~\cite{jeong}  develops a new infinite family of   invariants that bound the splitting number, based on Khovanov homology.  We will be working in the category of smooth oriented links, but notice that the splitting number is independent of the choice of orientation.  
   
   To state our results, we remind the reader of the notion of  a band connected sum of a  link $L$.  A {\it band} $b$ is an embedding  $b\co [0,1] \times [0,1] \to S^3$ such that $\text{Image}(b) \cap L = b([0,1] \times \{0, 1\}) $.   The orientation of the band must be consistent with the orientation of the link.  The link $L_b$ is $L  \setminus (  b([0,1]\times \{0, 1\}) ) \cup (b(\{0,1\} \times [0,1]$).  Similarly, for a link $L$ of $k$ components, we can consider a set of $k-1$ disjoint bands $\beta = \{b_1, \ldots , b_{k-1}\}$ and use these to construct a link $L_{\beta}$; we will always work in the setting that $\beta$ has the property that $L_{\beta}$ is connected.  We will call such a set of bands a {\it minimal connecting set of bands}.
   
   For a knot $K$, we denote the mirror image of $K$ with string orientation reversed by $\overline{K}$.

   \begin{theorem}\label{thm:main} Let $L =L_1 \cup \ldots \cup L_k $ be an oriented $k$--component link with linking numbers $lk(L_i,L_j) =l_{i,j}$ for $i \ne j$, and let $\beta$ be a minimal connecting set of  bands.  Let $N$ be the total linking number: $N =  \left|\sum_{ i<j} l_{i,j}\right|$. Then $$sp(L) \ge 2g_4(L_\beta  \#_{i=1}^{k} \overline{L}_i) - N.$$
 \end{theorem}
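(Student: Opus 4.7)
The plan is to exhibit a surface $F\subset D^4$ with $\partial F = L_\beta \#_{i=1}^k \overline{L}_i$ of genus at most $(sp(L)+N)/2$. Set $n = sp(L)$ and fix a sequence of $n$ crossing changes between distinct components that converts $L$ into the split link $L^{\mathrm{split}}=L_1\sqcup\cdots\sqcup L_k$. Classifying each crossing change by the sign of the crossing being altered, let $p$ and $q$ denote the numbers of positive and negative crossing changes. Since the total linking number drops from $\pm N$ to $0$ through the changes, $p + q = n$ and $|p - q| = N$, so $\max(p,q) = (n+N)/2$.

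After isotoping $\beta$ to be disjoint from the crossings to be changed, I apply the bands first to form the knot $L_\beta$ and view the $n$ crossing changes as self-crossing changes of $L_\beta$; the resulting knot $J$ is the band sum $(L^{\mathrm{split}})_{\beta'}$ of the split link by the bands $\beta'$ carried along through the changes. Because the crossings being changed lie entirely within the $L_\beta$-summand, these self-crossing changes extend naturally to self-crossing changes of the knot $L_\beta\#_{i=1}^k\overline{L}_i$, converting it into $J\#_{i=1}^k\overline{L}_i$.

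Next I show that $J \#_{i=1}^k \overline{L}_i$ is slice. For each $i$, the knot $L_i\#\overline{L}_i$ bounds a canonical slice disk $D_i\subset D^4$. Attaching the $k$ disks by the $k-1$ bands $\beta'$---each of which meets a pair of the $D_i$ along the $L_i$-portions of their boundaries---produces a connected surface with Euler characteristic $k-(k-1)=1$ and one boundary circle, hence a disk.

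The main step, and the principal obstacle, is to construct a cobordism in $D^4$ from $L_\beta\#_{i=1}^k\overline{L}_i$ to $J\#_{i=1}^k\overline{L}_i$ of genus at most $\max(p,q) = (n+N)/2$, realized by the $n$ self-crossing changes. The associated singular concordance is an immersed annulus in $D^4$ with $n$ transverse double points, $p$ of positive sign and $q$ of negative sign. The plan is to pair up $\min(p,q)$ opposite-sign pairs of double points: each such pair can be resolved at the cost of a single handle---by a Whitney-type move in the simply connected $4$-ball, or by an explicit band construction exploiting the opposite signs---rather than by two separate tubes; each of the $|p-q|=N$ unpaired double points is resolved individually by a tube, contributing genus $1$. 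The total genus added is $\min(p,q)+|p-q|=\max(p,q)=(n+N)/2$. Capping the $J\#_{i=1}^k\overline{L}_i$-end of this cobordism with the slice disk from the previous paragraph produces a surface in $D^4$ bounded by $L_\beta \#_{i=1}^k \overline{L}_i$ of the required genus, yielding $sp(L)\ge 2g_4(L_\beta\#_{i=1}^k\overline{L}_i)-N$.
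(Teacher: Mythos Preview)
Your proof is correct and follows essentially the same route as the paper: trace the splitting crossing changes as an immersed concordance, carry the bands along, connect-sum with the $\overline{L}_i$ so that the far end becomes slice, and then resolve the double points---pairing opposite signs at the cost of one handle each and resolving the remaining $|p-q|$ individually---to obtain a surface of genus $\max(p,q)=(sp(L)+N)/2$. The only cosmetic difference is that the paper organizes the double-point count pairwise (introducing the sets $\mathcal{P}$, $\mathcal{N}$ and excesses $m_{i,j}$) while you do a single global count via the total linking number; both yield $|p-q|=N$ and the same genus bound.
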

   
   As a simple corollary, we have:
   
   \begin{corollary}\label{cor:main} If $L $ is a $k$--component link with unknotted components and with all linking numbers 0, then for any minimal set of connecting bands, $ sp(L) \ge 2g_4(L_\beta)$.   \end{corollary}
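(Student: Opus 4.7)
The plan is to deduce Corollary~\ref{cor:main} directly from Theorem~\ref{thm:main} by simplifying each term on the right-hand side under the two hypotheses (unknotted components and vanishing linking numbers). No new topology is required; the corollary is essentially a substitution.

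First, I would handle the linking number term. Since $l_{i,j} = 0$ for all $i \ne j$, we have
\[
N = \left|\sum_{i<j} l_{i,j}\right| = 0,
\]
so the $-N$ summand in Theorem~\ref{thm:main} disappears. Next, I would simplify the four-genus. Because each component $L_i$ is unknotted, its reverse-mirror $\overline{L}_i$ is also unknotted, and connect-summing a knot with an unknot does not change the knot type. Hence
\[
L_\beta \#_{i=1}^{k} \overline{L}_i = L_\beta,
\]
as knots, and in particular $g_4\!\left(L_\beta \#_{i=1}^{k} \overline{L}_i\right) = g_4(L_\beta)$.

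Combining these two observations and applying Theorem~\ref{thm:main} gives
\[
\sp(L) \;\ge\; 2 g_4\!\left(L_\beta \#_{i=1}^{k} \overline{L}_i\right) - N \;=\; 2 g_4(L_\beta),
\]
which is the stated inequality. There is no substantive obstacle here; the only point one might want to double-check is that the invariance $\overline{U} = U$ for the unknot $U$ is being used with the convention stated earlier in the paper (mirror with string orientation reversed), and that connect sum of knots is well-defined so that iteratively summing unknots genuinely returns $L_\beta$ up to isotopy. Both are standard, so the corollary follows immediately.
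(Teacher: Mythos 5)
Your proof is correct and matches the paper's treatment: the corollary is presented there as an immediate consequence of Theorem~\ref{thm:main}, obtained exactly by noting $N=0$ and that each $\overline{L}_i$ is unknotted so the connected sum leaves $L_\beta$ unchanged. Nothing further is needed.
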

   
   \begin{example}  The simplest nontrivial link, the Hopf link, illustrates the role of the choice of $\beta$.  One band connected sum yields the unknot, with four-genus 0 and another yields the trefoil, with four-genus 1; from this,  Theorem~\ref{thm:main} implies the obvious, that the splitting number is 1.
   \end{example}
   
   \begin{example} The first two basic examples of non-split links which are algebraic unlinked are the Whitehead link and the Borromean link.  Most tools for studying splittings handle these examples, as does Corollary~\ref{cor:main}. For both links, band moves yield the trefoil knot, of four-genus 1, showing the splitting number is at least 2.  Splittings with exactly two crossing changes are easily constructed.
   
    Here is a generalization of an example in~\cite{bg},  studied in more depth in~\cite{ccz}.   Consider the two-bridge  link illustrated in Figure~\ref{fig:examples}, with all of $m, n$ and $l$ nonnegative.  The numbers in the boxes represent full twists.  Without loss of generality, we can assume $m \ge n$.  The linking number is $ m-n $. The illustrated band leads to a  knot $L_b$ whose signature is easily computed to be $-2m$, so $g_4(L_b) \ge  m$.  In fact, $L_b$ is the connected sum of the torus knot $T_{2,2m+1}$  and a genus 1 knot of signature 0.  Thus, by Theorem~\ref{thm:main},  $sp(L) \ge   2m - (m-n) = n +m$.  The link can evidently  be split with $n+m$ crossings changes, so $sp(L) = n+m$.
 \begin{figure}[h]
\fig{.5}{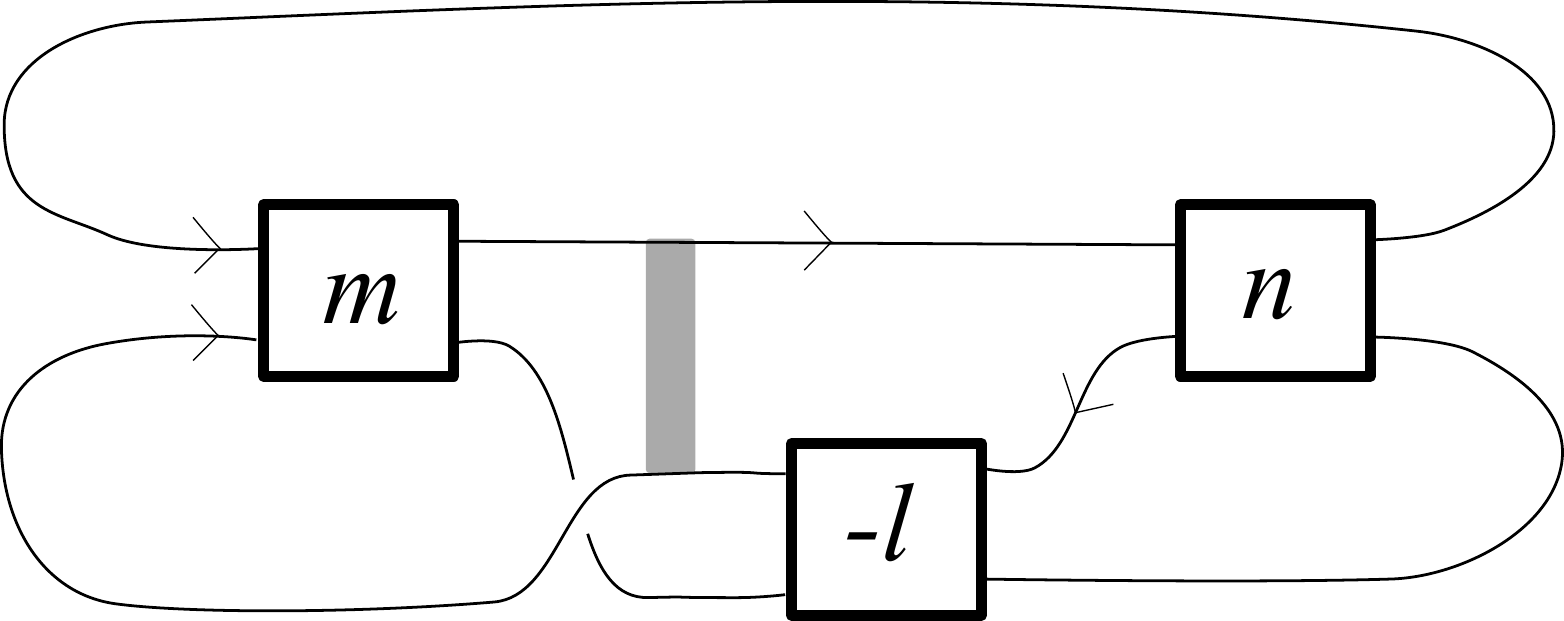}  \caption{A family of two-bridge links.}\label{fig:examples}
\end{figure}
 
  The cases of $(m,n,l) = (1,2,1)$ and $(m,n,l) = (2,3,1)$ are the links   $L_{9a30}$ and $L_{11a372}$. The splitting numbers of these were determined in~\cite{cfp}, with $L_{9a30}$ serving as a basic example and $L_{11a372}$ as an example of a case which could not be resolved in~\cite{batson-seed}.
  \end{example}
   Theorem~\ref{thm:main} provides a surprisingly easy and effective tool in determining splitting numbers, but it is not difficult to find examples for which it is   weaker than previously developed methods.  One reason is that the bound given in Theorem~\ref{thm:main} is in fact a bound on the {\it concordance splitting number}, $csp(L)$, which is implicitly studied in~\cite{bg}.  This invariant is discussed in Section~\ref{sec:csp}.  The next family of examples presents the distinction between the two invariants.
   
   \begin{example}
   Figure~\ref{fig:bing}\label{exp:bing} illustrates a link $L_K$, the Bing double of a knot $K$. The presence of an incompressible torus in its complement shows that if $K$ is nontrivial, then $sp(L_K) = 2$.   If $K$ is slice, then $L_K$ is concordant to the unlink, so $csp(L) = 0$ and the splitting number cannot be detected by Theorem~\ref{thm:main}.  
   
   The indicated band move on the Bing double  produces the untwisted Whitehead double, $Wh(K)$.  Thus, by Corollary~\ref{cor:main}, if $K$ is such that  $Wh(K)$ is not slice, then $L_K$ is not concordant to a split link.  As an example, letting $K = Wh(T_{2,3})$ yields an example of a link $L_K$ which is topologically but not smoothly concordant to a split link.  Presumably, algebraic invariants would not detect the splitting number in this case.  
   
   Alternative approaches to showing the concordance splitting number of $L_K$ is 2 (for specific choices of $K$)  can be based on showing that the Bing double is not strongly slice, which was done, for instance, in~\cite{clr, cimasoni}.
\begin{figure}[h]
\fig{.45}{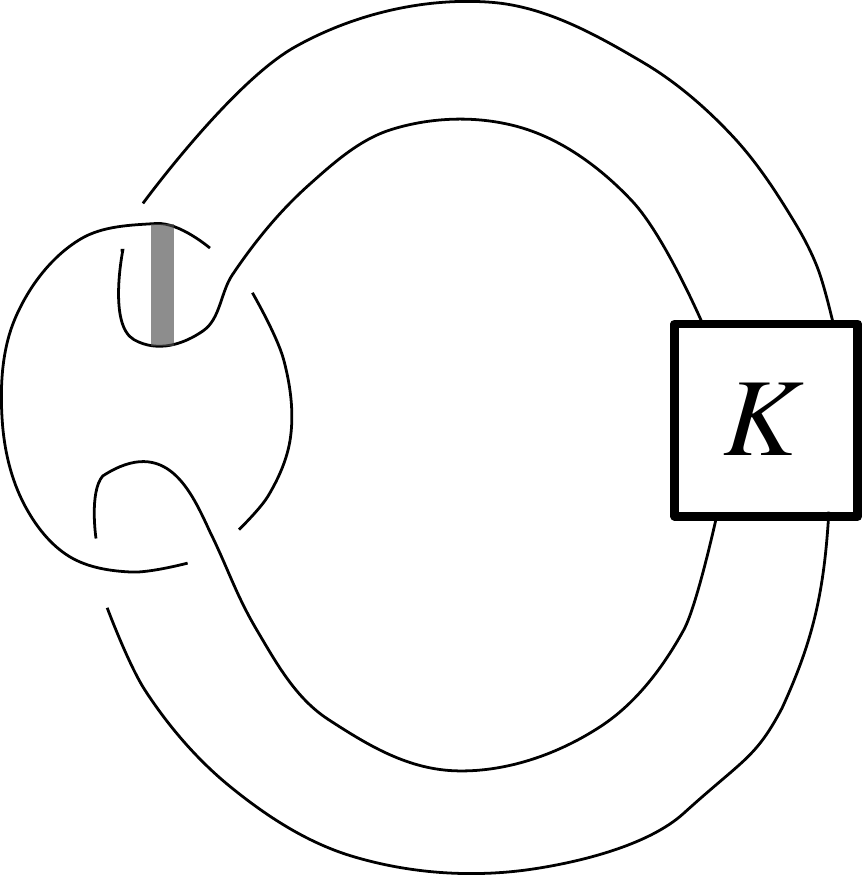}  \caption{The Bing double, $L_K$.}\label{fig:bing}
\end{figure}

\end{example}

   \section{Proof of Theorem~\ref{thm:main}}
   
   \subsection{The trace of the isotopy}
A set of crossing changes in $L$ corresponds to an isotopy of $L$ with double points; the trace of this isotopy in $S^3\times [0,1]$  is an {\it  immersed concordance}.  To be specific, an immersed concordance between  $k$--component links  $L^0$ and $L^1 $ is a smooth immersion $$F\co  S^1 \times [0,1] \times \{1,\ldots, k\} \to S^3 \times [0,1]$$ such that  $$F(S^1 \times i \times j) = L^i_j \subset S^3 \times i$$ for $i = 0,1$ and $j= 1,\ldots , k$.   Singular points are requred to be isolated transverse double points.

 In the setting of Theorem~\ref{thm:main}, $L^0 = L$, $L^1 = L_1 \cup \cdots \cup L_k$  (the split link with components the knots $L_i$) and the immersed concordance consists of a set of $k$ {\it embedded} concordances intersecting transversely in double points. These embedded concordances are called {\it the components} of the immersed concordance, although they need not be disjoint.

Projection of $S^3 \times [0,1]$ onto $[0,1]$ defines a height function.  In the current situation, there are no critical points for the height function on the concordance; each component is an embedded product concordance.  

The isotopy of $L^0$ can a be extended to the bands by isotoping them so  that they do not interfere with the crossings.  Thus, we construct an immersed concordance from $L^0_\beta$ to $L^1_{\beta'}$ for some set of bands $\beta'$.

 \subsection{Forming the connected sum with the $\overline{L}_i$.}
We now form the connected sum of $L_\beta $ with $ \cs_i \overline{L}_i$.  Do this by forming the connected sum of each $L_i$ with the corresponding $\overline{L}_i$, so that the $\overline{L}_i$ is in a small ball far from the basepoints of any $b \in \beta$.   It is now clear that  we can modify the immersed concordance to form an  immersed concordance  from  $L_\beta \cs_i \overline{L}_i$ to $(\cs_i (L_i \cs\overline{L}_i))_{\beta'}$.

\subsection{Forming an immersed slice disk}\label{sec:slice}
Observe that the knot $(\cs_i (L_i \cs \overline{L}_i))_{\beta'}$ is slice.  A set of $k-1$ band moves (dual to the bands of $\beta'$)  yields the link $(L_1\cs  \overline{L}_1)  \cup \cdots \cup (L_k\cs \overline{L}_k) $.  Since the components are split and each is slice, we see that the original knot is slice.  

Since  the knot $L^0_\beta \cs_i \overline{L}_i$ bounds a singular concordance to a slice knot, it bounds a singular slice disk in $B^4$ with corresponding singular points. 

\subsection{Counting and resolving the double points}
Let $\mathcal P$ be the set of pairs $(i,j), i < j$, such that the linking number $l_{i,j} \ge 0$ and let $\mathcal N$ be the set of pairs  $(i,j), i < j$, such that the linking number $l_{i,j} < 0$.  

For each pair $(i,j) \in \mathcal P$, the number of positive crossing  points  between the $i$ and $j$ components  during the splitting is $|l_{i,j} | + m_{i,j}$   for some $m_{i,j} \ge 0$; the number of negative crossing changes is $m_{i,j}$.

Similarly, for each pair $(i,j) \in \mathcal N$, the number of   negative crossing points  between the $i$ and $j$ components   during the splitting is    $|l_{i,j}| + m_{i,j}$ for some $m_{i,j} \ge 0$, and the number of positive crossing changes is $m_{i,j}$.

It follows from this count that the total number of positive double points is 
$$A = \sum_{(i,j)\in \mathcal P}| l_{i,j} |+\sum_{(i,j)\in \mathcal P \cup \mathcal N} m_{i,j}.$$ 
The number of negative double points is $$B= \sum_{(i,j)\in \mathcal N}| l_{i,j} |+\sum_{(i,j)\in \mathcal P \cup \mathcal N}m_{i,j}.$$ 

\subsection{Building an embedded  surface in the four-ball bounded by $L_\beta \cs_i \overline{L}_i$}  We now assume that the initial sequence of crossing changes was a minimal splitting sequence.
We build an embedded surface bounded by $L_\beta \cs_i \overline{L}_i$ by tubing together pairs of canceling double points and then resolving the remaining double points individually.  The resulting surface has genus $$g = \max(A,B) = (|A  + B| + |A - B|)/2.$$  Using the formulas for $A$ and $B$, this becomes 
$$2g =    ( \sum_{(i,j)\in \mathcal P \cup \mathcal N} |l_{i,j} | + 2\sum_{(i,j)\in \mathcal P \cup \mathcal N}m_{i,j})  + 
\big|(\sum_{(i,j) \in \mathcal P} |l_{i,j}| - \sum_{(i,j) \in \mathcal N} |l_{i,j}|
)\big|.
$$
The expression in the first set of parenthesis equals the splitting number; the second term (the absolute value of the difference of sums) is simply the absolute value of the  sum of the linking numbers, called $N$ in the statement of Theorem~\ref{thm:main}.  Thus,
$$2g = sp(L) + N,$$
and so, as desired, $$sp(L) = 2g -N\ge 2g_4(L_\beta \#_{i=1}^{k}  \overline{L}_i) - N.$$.

\section{Concordance splitting}\label{sec:csp}
The lower bound on the splitting number given in Theorem~\ref{thm:main} is in fact a  bound on the {\it concordance splitting number}.

\begin{definition} A link $L$ has concordance splitting number $csp(L) = n$ if $n$ is the least nonnegative integer such that  there is an immersed concordance form $L$ to a split link having $n$ double points and each component is embedded.
 \end{definition}
 
 \begin{example}  Example~\ref{exp:bing} demonstrates that for some links $csp(L) < sp(L)$.
 \end{example}

Notice that in the definition, the concordance need not be to the link $L_1 \cup \cdots \cup L_k$.  However, we have the following.

\begin{lemma}\label{lemma:fixend} If $csp(L) = n$, then there is an immersed concordance, with $n$ double points and each component embedded, from $L$ to  $L_1 \cup \cdots \cup L_k$.
\end{lemma}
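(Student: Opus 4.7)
\medskip
\noindent\emph{Proof plan.} Start from an immersed concordance $W \subset S^3 \times [0,1]$ from $L$ to some split link $L^{*} = L^{*}_1 \cup \cdots \cup L^{*}_k$ realizing $csp(L)=n$. Each component of $W$ is an embedded annulus, so (after reindexing the components of $L^{*}$) the $i$-th component is an embedded annulus $A_i$ with $\partial A_i = L_i \cup L^{*}_i$; in particular $L_i$ is smoothly concordant to $L^{*}_i$ as a knot. The overall plan is to stack $W$ with a second cobordism that locally replaces each $L^{*}_i$ by $L_i$, carried out independently inside disjoint balls so that no new double points are created.

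Because $L^{*}$ is split, I choose pairwise disjoint $3$-balls $B_1,\ldots,B_k \subset S^3$ with $L^{*}_i \subset B_i$, and fix a placement of $L_i$ inside $B_i$. The key step is to produce, for each $i$, an embedded annular concordance $C_i \subset B_i \times [1,2]$ with bottom $L^{*}_i$ and top $L_i$. Granting this, set $W'$ equal to $W$ on $S^3 \times [0,1]$ and, on $S^3 \times [1,2]$, to $\bigcup_i C_i$ extended by the identity outside $\bigcup_i B_i$; after reparametrizing $[0,2]$ to $[0,1]$, $W'$ is an immersed concordance from $L$ to $L_1\cup\cdots\cup L_k$. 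The $C_i$ are pairwise disjoint because they lie in disjoint regions $B_i \times [1,2]$, so no new double points appear, and each component of $W'$ is $A_i \cup C_i$, a union of two embedded annuli glued along $L^{*}_i$, hence embedded. Thus $W'$ has $n$ double points and embedded components, as desired.

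The main obstacle is producing $C_i$. The reverse of the embedded annulus $A_i$ gives a concordance from $L^{*}_i$ to $L_i$ in all of $S^3 \times I$, but a priori it need not lie in the smaller region $B_i \times I$. What is needed is the standard localization fact that two knots concordant in $S^3$ that happen to lie in a common $3$-ball $B$ are concordant inside $B \times I$. I would verify this by putting a given concordance $C$ in general position with respect to $\partial B \times I \cong S^2 \times I$, observing that each circle of $C \cap (S^2 \times I)$ bounds a disk in the simply connected $3$-manifold $S^2 \times I$, and then using innermost-disk surgeries in the $4$-ball $(S^3 \smallsetminus \Int B) \times I$ to push the exterior part of $C$ across $\partial B \times I$ into $B \times I$, carefully controlling the topology so that the resulting surface remains a concordance annulus.
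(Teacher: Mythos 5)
Your argument is correct and follows essentially the same route as the paper, whose (much terser) proof of this lemma likewise observes that each terminal component $L'_i$ is concordant to $L_i$ via the embedded annulus components and then extends the immersed concordance by these individual concordances; the care you take to perform the extensions inside disjoint products $B_i\times[1,2]$, so that no new double points are created, is precisely the point the paper leaves implicit. The only shaky spot is your proposed verification of the localization fact --- circles of $C\cap(S^2\times[0,1])$ need not bound \emph{embedded} disks in $S^2\times[0,1]$ (they may be knotted there), and circles essential on the annulus $C$ cannot be compressed without destroying the concordance --- so it is cleaner to argue either that a generic vertical arc $\{p\}\times[0,1]$ with $p\notin B$ misses the $2$--dimensional annulus $C$ by general position, and the complement of such an arc in $S^3\times[0,1]$ is a copy of $B^3\times[0,1]$ which can be shrunk into $B\times[0,1]$ fixing the two knots, or that sliceness of $L^*_i\,\#\,\overline{L}_i$ yields, after attaching the dual band to a slice disk, an annulus in the $4$--ball $B_i\times[1,2]$ bounded by the relevant split link, which is exactly the required $C_i$.
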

\begin{proof}The end of the immersed concordance is a link $L'_1 \cup \cdots \cup L'_k$.  Since the components of the immersed concordance are embedded, each $L'_i$ is concordant to $L_i$.  Thus, the immersed concordance can be extended using these individual concordances so that the ending link is $L_1 \cup \cdots \cup L_k$.
\end{proof}

We have the following analog of Theorem~\ref{thm:main}

\begin{theorem}Let $L =L_1 \cup \ldots \cup L_k $ be an oriented $k$--component link with linking numbers $lk(L_i,L_j) =l_{i,j}$ for $i \ne j$, and let $\beta$ be a set of $k-1$ bands for which $L_\beta$  is connected.  Let $N$ be the total linking number: $N =  \left|\sum_{ i<j} l_{i,j}\right|$. Then $$csp(L) \ge 2g_4(L_\beta  \#_{i=1}^{k} \overline{L}_i) - N.$$
\end{theorem}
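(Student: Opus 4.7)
The plan is to adapt the proof of Theorem~\ref{thm:main}, replacing the trace of a splitting isotopy with an arbitrary realizing immersed concordance. The key observation is that nowhere in Section~2 is the isotopy hypothesis used except to produce an immersed concordance whose components are embedded; every subsequent step depends only on the intersection data of that concordance.

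I would first invoke Lemma~\ref{lemma:fixend} to obtain an immersed concordance $F$ from $L$ to $L_1 \cup \cdots \cup L_k$ having $csp(L)$ transverse double points and each component embedded. Following Section~2.1, I would extend the bands of $\beta$ into $S^3 \times [0,1]$, perturbing them to avoid the finite set of double points, so that $F$ provides an immersed concordance from $L_\beta$ to $L_{\beta'}$ for some reconnecting band system $\beta'$. Following Section~2.2, I would form the connected sum with $\#_i \overline{L}_i$ in a small ball near a point of each $L_i$ away from $\beta$, producing an immersed concordance from $L_\beta \#_i \overline{L}_i$ to the slice knot $(\#_i(L_i \# \overline{L}_i))_{\beta'}$. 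Capping with a smooth slice disk then yields an immersed surface in $B^4$ bounded by $L_\beta \#_i \overline{L}_i$ whose only singularities are the original $csp(L)$ double points.

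The double-point count in Section~2.4 now carries over verbatim because each component of $F$ is embedded: the linking number $l_{i,j}$ of $L$ equals the signed count of double points between the $i$th and $j$th components of $F$, since the target has vanishing pairwise linking numbers and linking number is a homological invariant along embedded cobordisms. This gives the same formulas for the counts $A$ and $B$ of positive and negative double points, and the tubing-and-resolving construction of Section~2.5 produces an embedded surface of genus $g = \max(A,B)$ satisfying $2g = csp(L) + N$, from which the claimed bound $csp(L) \ge 2g_4(L_\beta \#_{i=1}^{k} \overline{L}_i) - N$ follows.

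The main point that genuinely requires verification is the identification of $l_{i,j}$ with the algebraic count of double points along the embedded components of $F$; once this standard fact about embedded concordances is recorded, the remainder is essentially a line-by-line transcription of the proof of Theorem~\ref{thm:main}, and I do not expect any serious obstacle.
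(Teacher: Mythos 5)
There is a genuine gap, and it sits exactly where the paper locates ``one significant difficulty.'' Your guiding claim --- that the isotopy hypothesis in Section~2 is used only to produce an immersed concordance with embedded components --- is not correct. In Section~2.1 the fact that the concordance is the \emph{trace of an isotopy} (so that the height function has no critical points on any component) is what allows the bands of $\beta$ to be dragged along from bottom to top, yielding an immersed concordance from $L_\beta$ to $L_{\beta'}$. A general immersed concordance realizing $csp(L)$, even after applying Lemma~\ref{lemma:fixend}, has components that are embedded annuli but are not level-preserving: they carry index $0$, $1$, and $2$ critical points, so at intermediate levels a single component may appear as several circles, some of which are born and die inside the cylinder. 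There is no isotopy to extend to the bands, so the sentence ``$F$ provides an immersed concordance from $L_\beta$ to $L_{\beta'}$'' does not follow by perturbing the bands off the double points; this is precisely the step that needs a new argument.

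The paper's fix is to reorder the critical points: push all index~$2$ critical points to height $3/4$ and everything else (including the double points) below $1/4$, so that at level $1/2$ the link is $L' \cup U_1 \cup \cdots \cup U_r$ with the $U_i$ a split unlink that gets capped off above. The index $0$ and $1$ critical points are compatible with carrying the bands, producing a genus~$0$ immersed cobordism from $L_\beta \cs_i \overline{L}_i$ to $(\cs_i (L_i \cs \overline{L}_i))_{\beta'} \cup U_1 \cup \cdots \cup U_r$. Even then one cannot conclude directly that the top is the slice knot of Section~2.3, because the bands $\beta'$ may link the $U_i$; instead one performs the band moves dual to $\beta'$ to reach a split link all of whose components ($L_i \cs \overline{L}_i$ and $U_i$) are slice, and caps those off to get the immersed slice disk. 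Your double-point count and the identification of $l_{i,j}$ with the algebraic intersection number of the $i$th and $j$th annuli are fine and match the paper, but without the critical-point rearrangement the construction of the immersed surface bounded by $L_\beta \cs_i \overline{L}_i$ does not go through.
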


\begin{proof} Much of the proof proceeds as before, but there is one significant difficulty.  The presence of possible maximum points in the concordance prevents one from converting the concordance of the link into a concordance of its band connected sum.  The bands might interfere with capping off unknotted components that arise from index two critical points.  Here is how the proof is adjusted.

The concordance can be modified so that all critical points of index 2 occur at height $3/4$ and all other critical points and double points occur below the height of $1/4$.  At level $1/2$ we have the link $L' \cup U_1 \cup \cdots \cup U_r$, where the $U_r$ form an unlink split from $L'$ (each component of which is capped off at level $3/4$).  

The index 0 and index 1 critical points do not interfere with the constructions used earlier, and from this one finds that there is a genus 0 immersed corbordism from  $L_\beta \cs_i \overline{L}_i$ to $(\cs_i (L_i \cs\overline{L}_i))_{\beta'}\cup U_1 \cup \cdots \cup U_r$.  Notice that the bands in $\beta'$ might link the $U_i$, and this is the point of difficulty.  However, we can use this cobordism to construct an immersed slice disk: perform the  band moves dual to the $\beta'$ to build a  split link with all components slice knots (some are the $L_i \cs \overline{L}_i$ and some are the $U_i$);  these can be capped off to form the immersed slice disk.

The rest of the proof is identical to that of Theorem~\ref{thm:main}.

\end{proof}

\vskip.2in
   
 \noindent{\it Acknowledgements}  The author thanks Maciej Borodzik, Kathryn Bryant, David Cimasoni, Anthony Conway, and Stefan Friedl for their useful feedback on a preliminary draft of this paper.


\end{document}